\theoremstyle{plain}
\newtheorem{theorem}{Theorem}[section]
\newtheorem{question}{Question}
\newtheorem{lemma}[theorem]{Lemma}
\theoremstyle{definition}
\newtheorem{definition}[theorem]{Definition}
\theoremstyle{remark}
\newcommand{\ex}{\mathrm{ex}}
\newcommand{\T}{\Tilde{T}}
\renewcommand{\t}{\Tilde{t}}
\newcommand{\K}[2]{K_{#1}^{(#2)}}
\renewcommand{\l}{\left}
\renewcommand{\r}{\right}
\begin{document}
\bibliographystyle{plain} 
\title{On Turán numbers for disconnected hypergraphs}
\author[1,2]{Raffaella Mulas\thanks{r.mulas@vu.nl} }
\author[1,3]{Jiaxi Nie\thanks{jiaxi\_nie@fudan.edu} }
\date{}
\affil[1]{Max Planck Institute for Mathematics in the Sciences, Leipzig, Germany}
\affil[2]{Vrije Universiteit Amsterdam, Amsterdam, The Netherlands}
\affil[3]{Shanghai Center for Mathematical Sciences, Fudan University, Shanghai, China}
\maketitle
\allowdisplaybreaks[4]

\begin{abstract}
We introduce the following simpler variant of the Turán problem:
Given integers $n>k>r\geq 2$ and $m\geq 1$, what is the smallest integer $t$ for which there exists an $r$-uniform hypergraph with $n$ vertices, $t$ edges and $m$ connected components such that any $k$-subset of the vertex set contains at least one edge? We prove some general estimates for this quantity and for its limit, normalized by $\binom{n}{r}$, as $n\rightarrow \infty$. Moreover, we give a complete solution of the problem for the particular case when $k=5$, $r=3$ and $m\geq 2$.

\vspace{0.2cm}
\noindent {\bf Keywords:} Turán numbers; Turán problem; Hypergraphs
\end{abstract}


\section{Introduction}
Given $r\geq 2$, an \emph{$r$-uniform hypergraph}, or \emph{$r$-graph} for short, is a pair $H=(V,E)$, where $V=V(H)$ is a finite set of \emph{nodes} or \emph{vertices}, and $E=E(H)$ is a set of $r$-subsets of $V$, called \emph{edges}. In particular, a graph is a $2$-graph. Given an $r$-graph $F$, the \emph{Turán number} $\ex(n,{F})$  is the largest integer $t$ such that there exists an $r$-graph on $n$ vertices and $t$ edges that does not contain $F$ as a sub-hypergraph.\newline
The \emph{Turán problem} consists of determining or estimating $\ex(n,K_k^{(r)})$, where $K_k^{(r)}$ is a \emph{complete} $r$-graph on $k$ vertices, i.e., the hypergraph consisting of all possible $r$-subsets of $V$. Moreover, one is also interested in estimating the limit
$$
\pi_{r,k}:=\lim_{n\to\infty}\frac{\ex(n,\K{k}{r})}{\binom{n}{r}},
$$
where $\binom{n}{r}$ is the number of edges of $\K{n}{r}$. Note that the edge density of an $n$-vertex $r$-graph equals the average edge density of its $(n-1)$-vertex induced subgraphs. Hence $\ex(n,\K{k}{r})/\binom{n}{r}$ is non-increasing, which implies that $\pi_{r,k}$ is well-defined. This simple averaging argument was first introduced by Katona, Nemetz and Simonovits~\cite{katona1964problem}.  \newline
The Tur\'an problem was introduced in 1941 by Paul Turán \cite{turan1941}, who showed that $\pi_{2,k}=1/(k-1)$. More specifically, let $\mathcal{T}_{n,k}$ be the graph on $n$ vertices such that the vertex set can be written as $V=V_1\sqcup\cdots\sqcup V_k$, each $V_i$ has size either $\lceil n/k\rceil$ or $\lfloor n/k \rfloor$, and two vertices form an edge if and only if they belong to different $V_i$'s. Tur\'an showed that
$$
\ex(n,\K{k}{2})=|E(\mathcal{T}_{n,k-1})|
$$
and $\mathcal{T}_{n,k-1}$ is the only $n$-vertex, $\K{k}{2}$-free graph that achieves
this number of edges. Eighty years later, there are still very few results on the Tur\'an's problem for $k>r\ge 3$. Tur\'an conjectured that $\pi_{3,4}=5/9$ and $\pi_{3,5}=3/4$. There are exponentially many constructions achieving the conjectured densities, see~\cite{brown1973extremal,fon1988method,frohmader2008more,kostochka1982class}. The best upper bound so far for $\pi_{3,4}$ is $0.561666$ by Razborov~\cite{razborov20103} using flag algebra method. Some classical surveys are \cite{keevash2011survey,sidorenko1995we,de1991current,furedi1991}, and other related results are presented, for instance, in \cite{SIDORENKO1997,NORIN2018,MA2020,Gishboliner,keevash2005,frankl1988extremal,mubayi2006hypergraph,bollobas1974three,keevash2004stability,de2000maximum,simonovits1968method,mubayi2005proof,pikhurko2010analytic}.\newline
Paul Erdős, who was a close collaborator of Turán, rarely offered prizes for problems that were posed by others \cite{chung1998erdos}, offered 500 dollars for determining 
$
\pi_{r,k}
$
for even one single pair $k>r>2$, and he offered 1000 dollars for solving the whole set of problems \cite{erdHos1981combinatorial}. After Erdős' death, his close collaborators Fan Chung and Ron Graham declared that they were willing to offer these prizes, as a way to honor him \cite{chung1998erdos}. At the time of writing, Fan Chung is currently in charge of these rewards.
\newline

The Turán problem can also be reformulated in a dual way, as follows.\newline 
Given an $r$-graph $H=(V,E)$, its \emph{complement} is the hypergraph $H^\textrm{c}:=(V,E^\textrm{c})$, where
    \begin{equation*}
        E^\textrm{c}:=\{f\subseteq V:|f|=r \text{ and }f\notin E \}.
    \end{equation*}Clearly,
    \begin{equation*}
        |E^\textrm{c}|=\binom{n}{r}-|E|,
    \end{equation*}therefore, given $t\in\mathbb{N}$,
    \begin{equation*}
        |E|\leq t \iff |E^\textrm{c}|\geq \binom{n}{r}-t.
    \end{equation*}Also, $H$ is $K_k^{(r)}$-free if and only if $H^\textrm{c}$ satisfies the condition
    \begin{equation}\label{complement-condition}
    \forall v_1,\ldots,v_k\in V \Longrightarrow \exists f\subseteq \{v_1,\ldots,v_k\}: f\in E^\textrm{c}.
    \end{equation}Hence, $H$ is an \emph{optimal solution} for the Turán problem, i.e., it maximizes the number of edges among all $K_k^{(r)}$-free $r$-graphs on $n$ nodes, if and only if $H^\textrm{c}$ minimizes the number of edges among all $r$-graphs on $n$ nodes that satisfy \eqref{complement-condition}. Motivated by this dual (and equivalent) formulation, we say that the \emph{dual Turán problem} consists of determining or estimating
    \begin{equation*}
        T(n,K_k^{(r)}):=\binom{n}{r}-\ex(n,K_k^{(r)}),
    \end{equation*}which is the smallest integer $t$ such that there exists an $r$-graph on $n$ vertices and $t$ edges satisfying \eqref{complement-condition}.
    Further, we let 
$$
t_{r,k}:=\lim_{n\to\infty}\frac{T(n,\K{k}{r})}{\binom{n}{r}}.
$$
Clearly, we have $t_{r,k}=1-\pi_{r,k}$.\newline

In this paper, we introduce and study a variant of the Tur\'an number concerning the number of connected components. Namely, we let $T(n,\K{k}{r};m)$ be the smallest integer $t$ such that there exists an $r$-graph with $n$ vertices, $t$ edges and $m$ connected components that satisfies \eqref{complement-condition}. We also consider the limit
$$
t_{r,k}(m):=\lim_{n\to\infty}\frac{T(n,\K{k}{r};m)}{\binom{n}{r}}.
$$
The existence of this limit can be proved by an argument similar to that of $\pi_{r,k}$. One of our main results is the following theorem, that we shall prove in Section \ref{section:connected}.

\begin{theorem}\label{thm:r-graph}
  Let $k>r\ge2$ be integers. If $n\ge k+\binom{k-2}{r-1}$ and $T(n,\K{k}{r})=T(n,\K{k}{r};m)$, then 
  $$
  m\le \l\lfloor\frac{k-1}{r-1}\r\rfloor.
  $$
\end{theorem}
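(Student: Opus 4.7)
The plan is to separate cases based on whether the hypergraph $H$ witnessing $T(n,K_k^{(r)})=T(n,K_k^{(r)};m)$ contains any isolated vertex. Let $m_1$ denote the number of isolated-vertex components and $m_2=m-m_1$ the number of non-trivial components, each of size at least $r$. Since every edge lies in a single component, taking at most $r-1$ vertices from each non-trivial component together with all $m_1$ isolated vertices yields an edge-free subset; to avoid contradicting \eqref{complement-condition} with a $k$-subset, this subset cannot have size $\geq k$, giving $(r-1)m_2+m_1\leq k-1$. When $m_1=0$ this immediately gives $m=m_2\leq\lfloor(k-1)/(r-1)\rfloor$, so the remaining task is to rule out $m_1\geq 1$.

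Suppose $v$ is an isolated vertex of $H$. Then every $(k-1)$-subset of $V(H)\setminus\{v\}$ together with $v$ forms a $k$-subset whose guaranteed edge cannot use $v$, so $H-v$ is a $K_{k-1}^{(r)}$-free-complement hypergraph on $n-1$ vertices; conversely, adjoining an isolated vertex to any $K_{k-1}^{(r)}$-free-complement hypergraph on $n-1$ vertices produces a $K_k^{(r)}$-free-complement hypergraph on $n$ vertices. Combined with the optimality of $H$, these force $T(n,K_k^{(r)})=T(n-1,K_{k-1}^{(r)})$, so the theorem will follow if I can prove the strict inequality $T(n,K_k^{(r)})<T(n-1,K_{k-1}^{(r)})$ under the hypothesis $n\geq k+\binom{k-2}{r-1}$.

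For the strict inequality I plan the following construction. Starting from an optimal $K_{k-1}^{(r)}$-free-complement hypergraph $H^{\#}$ on $n-1$ vertices, I would first locate an $(r-1)$-subset $T\subseteq V(H^{\#})$ contained in at least two edges of $H^{\#}$; writing $W_T=\{w:\{w\}\cup T\in E(H^{\#})\}$ with $|W_T|\geq 2$, I then adjoin a new vertex $u$ and define $H'$ on $V(H^{\#})\cup\{u\}$ by removing every edge $\{w\}\cup T$ for $w\in W_T$ and adding the single edge $\{u\}\cup T$, yielding $|E(H')|=|E(H^{\#})|-|W_T|+1\leq T(n-1,K_{k-1}^{(r)})-1$. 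Verification that $H'$ is $K_k^{(r)}$-free-complement splits into cases on a $k$-subset $S$: if $T\not\subseteq S$ no removed edge lies in $S$ and the $(k-1)$-free-complement property of $H^{\#}$ supplies an edge of $S$ which cannot be of the form $\{w\}\cup T$, so it survives in $H'$; if $T\subseteq S$ and $u\in S$ then the added edge $\{u\}\cup T$ lies in $S$; and if $T\subseteq S$ and $u\notin S$, deleting any $t\in T$ from $S$ yields a $(k-1)$-subset of $V(H^{\#})$ whose $H^{\#}$-edge cannot contain $T$ (since $t$ is gone) and hence is still in $H'$.

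The main obstacle is producing the $(r-1)$-subset $T$ with $|W_T|\geq 2$, and this is where the hypothesis $n\geq k+\binom{k-2}{r-1}$ enters decisively. I plan to argue by contradiction: assuming every $(r-1)$-subset lies in at most one edge of $H^{\#}$, I would show by downward induction on $s$ that $H^{\#}$ is $K_s^{(r)}$-free-complement for every $s$ from $k-1$ down to $r$. Granting the property for $s+1$, any $s$-subset $Y$ without an edge would force each of the $n-1-s$ vertices $w\in V(H^{\#})\setminus Y$ to witness a distinct $(r-1)$-subset $T_w\subseteq Y$ with $\{w\}\cup T_w\in E(H^{\#})$ (distinctness from the single-edge assumption); but $n-1-s\geq n-k+1\geq\binom{k-2}{r-1}+1>\binom{s}{r-1}$ contradicts pigeonhole, so $Y$ must contain an edge. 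Iterating down to $s=r$ forces $H^{\#}$ to be the complete $r$-uniform hypergraph, in which every $(r-1)$-subset lies in $n-r\geq 2$ edges, yielding the required $T$ and completing the proof.
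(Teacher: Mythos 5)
Your proof is correct. It shares the paper's overall skeleton---once the optimal solution is known to have no isolated vertices, each of its $m$ components contains an independent set of size $r-1$, and these combine into an independent set of size $(r-1)m$, which must be at most $k-1$---but your elimination of isolated vertices, which is where the hypothesis $n\ge k+\binom{k-2}{r-1}$ does all the work, follows a genuinely different route. The paper's Lemma~\ref{lem:isolated} works with the edge-maximal $K_k^{(r)}$-free graph and uses its saturation (adding any edge to $H\setminus\{u\}$ creates a $K_{k-1}^{(r)}$) to produce a $(k-2)$-set spanning no non-edge; pigeonholing the non-edges $e_i\setminus\{w_i\}$ into the $\binom{k-2}{r-1}$ many $(r-1)$-subsets of that set yields two parallel non-edges through a common $(r-1)$-set $U$, and the swap of the edge $U\cup\{u\}$ for the two edges $U\cup\{w_i\}$ and $U\cup\{w_j\}$ contradicts maximality. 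You instead reduce to the strict inequality $T(n,K_k^{(r)})<T(n-1,K_{k-1}^{(r)})$ and locate the doubly-covered $(r-1)$-set $T$ inside the optimal $(n-1)$-vertex covering hypergraph by a downward induction on the covering property, using only the covering condition plus pigeonhole and never the criticality of the optimum; the numerical hypothesis enters at the first induction step $s=k-2$ exactly as in the paper, and the later steps only get easier since $\binom{s}{r-1}$ decreases while $n-1-s$ increases. Both arguments end with essentially the same local edge swap, but your route to the swappable configuration is more self-contained, and the intermediate inequality $T(n,K_k^{(r)})<T(n-1,K_{k-1}^{(r)})$ is a clean statement of independent interest. (One small point worth making explicit in a write-up: for $k=r+1$ the downward induction is vacuous and the no-doubly-covered-set assumption immediately forces $H^{\#}$ to be complete, which still yields the contradiction because $n\ge r+2$ in that case.)
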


In particular, for $k\le 2r-2$, Theorem~\ref{thm:r-graph} implies that the optimal solution of $T(n,\K{k}{r})$ must be connected when $n$ is large enough.\newline
 
Note that in the case of graphs, the optimal solutions of $T(n,\K{k}{2})$ always have $k-1$ connected components. That is, $T(n,\K{k}{2};k-1)<T(n,\K{k}{2};m)$ for any $m<k-1$. It is natural to ask whether this phenomenon extends to $r$-graphs, i.e., if the optimal solutions of $T(n,\K{k}{r})$ always have $\lfloor{(k-1)}/{(r-1)}\rfloor$ connected components. We show that the answer to this question is ``no'' for $r$-graphs when $k-1$ is a multiple of $r-1$:
\begin{theorem}\label{thm:disconnected_not_better}
For integers $r\ge3$, $k\ge 1$ and $n\ge (r-1)k+1+\binom{(r-1)k-1}{r-1}$ such that $k|n$, there exists an integer $m<k$ such that
$$
T(n,\K{(r-1)k+1}{r};k)\ge T(n,\K{(r-1)k+1}{r};m).
$$
\end{theorem}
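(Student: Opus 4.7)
The plan is to (a) pin down the value of $T(n, \K{(r-1)k+1}{r}; k)$ exactly, and then (b) exhibit an $r$-graph with $m<k$ components whose edge count is at most this value.

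\textbf{Step (a): the $k$-component value.} First I would establish that, under $k\mid n$,
$$
T(n, \K{(r-1)k+1}{r}; k) \;=\; k\binom{n/k}{r}.
$$
The upper bound is realized by the balanced $k$-partite construction $\K{n/k}{r}\sqcup\cdots\sqcup \K{n/k}{r}$: any $(r-1)k+1$-subset contains, by pigeonhole, at least $r$ vertices in some part, hence an edge. For the matching lower bound (restricted to exactly $k$ components) I would argue, as in the proof of Theorem~\ref{thm:r-graph}, that each component must be a complete $r$-graph. Indeed, for any $r$-subset $R$ inside a component $V_i$, augmenting $R$ with $r-1$ vertices from each of the other $k-1$ components produces a $(r-1)k+1$-subset whose only candidate edge is $R$; this augmentation is feasible because the hypothesis $n\ge(r-1)k+1+\binom{(r-1)k-1}{r-1}$ forces every component to have size at least $r-1$. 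Convexity of $\binom{\cdot}{r}$ as a function of component size then identifies balanced parts of size $n/k$ as the minimizer.

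\textbf{Step (b): construction with $m=k-1$ components.} Next I would construct an $r$-graph $H'$ with $m=k-1$ components and at most $k\binom{n/k}{r}$ edges satisfying \eqref{complement-condition} for $\K{(r-1)k+1}{r}$. The skeleton: partition $V=U\sqcup V_3\sqcup\cdots\sqcup V_k$ with $|U|=2n/k$ and $|V_i|=n/k$ for $i\ge3$, and make each small part $V_i$ a complete $\K{n/k}{r}$, contributing $(k-2)\binom{n/k}{r}$ edges. On $U$ I place a connected $r$-graph with at most $2\binom{n/k}{r}$ edges such that every $(2r-1)$-subset of $U$ contains an edge; the threshold $2r-1$ arises as the worst-case composition of a $(r-1)k+1$-subset in which every small $V_i$ contributes exactly $r-1$ vertices. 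For $U$ itself, I start from the balanced $2$-partite $\K{n/k}{r}\sqcup \K{n/k}{r}$ on $U=U_1\sqcup U_2$ (disconnected, exactly saturating the $2\binom{n/k}{r}$ budget) and then perform an edge-preserving swap: exchange a carefully chosen collection of within-sub-part edges for an equal number of cross edges gluing the two sub-parts. The swap is encoded by choosing a set of $(r-1)k-1$ ``special'' vertices in $U_1$ and a bijection $B\mapsto u_B$ between the $\binom{(r-1)k-1}{r-1}$ $(r-1)$-subsets $B$ of the specials and ``matcher'' vertices $u_B$ in $U_2$; for each $B$ we remove a designated $U_1$-edge supported on $B$ and add the cross edge $B\cup\{u_B\}$. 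The hypothesis $n\ge(r-1)k+1+\binom{(r-1)k-1}{r-1}$ is exactly the size threshold that lets both the specials sit inside $U_1$ and the matchers sit inside $U_2$ (so, in particular, ensuring $n/k$ is large enough for the bijection to exist).

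\textbf{Main obstacle.} The hardest part will be verifying that the swap actually preserves the covering condition on $U$. The dangerous $(2r-1)$-subsets $S\subseteq U$ are those with $|S\cap U_1|=r$ coinciding with a removed edge and $|S\cap U_2|=r-1$; for every such $S$ one of the added cross edges $B\cup\{u_B\}$ must lie inside $S$, which forces $u_B\in S\cap U_2$. Making this happen simultaneously for all problematic $S$ is a combinatorial design problem that dictates the precise choice of which within-$U_1$ edge to remove for each $B$ together with the precise matcher assignment, and the threshold $n\ge(r-1)k+1+\binom{(r-1)k-1}{r-1}$ is exactly the smallest value of $n$ for which such a consistent design can be found. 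Once this is established, $H'$ has exactly $k-1<k$ components and at most $k\binom{n/k}{r}=T(n,\K{(r-1)k+1}{r};k)$ edges, which yields the theorem with this choice of $m=k-1$.
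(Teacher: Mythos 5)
Your overall strategy --- compute $T(n,\K{(r-1)k+1}{r};k)$ exactly and then beat it with fewer components --- is reasonable, but both halves have genuine gaps. In Step (a), your lower bound rests on the claim that every component of a $k$-component optimal solution has at least $r-1$ vertices, which you attribute to the hypothesis on $n$; that hypothesis does not rule out isolated-vertex components (a component is either a single vertex or has at least $r$ vertices, and an isolated vertex is perfectly compatible with the covering condition \eqref{complement-condition}, since the remaining $k-1$ components can absorb the slack in the independence-number count). The paper treats the isolated-vertex possibility as a separate case, disposing of it via Lemma~\ref{lem:isolated}; indeed the threshold $n\ge (r-1)k+1+\binom{(r-1)k-1}{r-1}$ is exactly the hypothesis of that lemma with $k$ replaced by $(r-1)k+1$, not (as you guess) a design-existence threshold for your Step (b).

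The larger gap is Step (b). You need a connected $r$-graph on $2n/k$ vertices with independence number at most $2r-2$ and at most $2\binom{n/k}{r}$ edges, and you propose to obtain it by an edge swap on $\K{n/k}{r}\sqcup\K{n/k}{r}$ --- but you yourself flag that verifying the covering condition after the swap is an unsolved ``combinatorial design problem.'' That verification is the entire content of the construction; as written, removing $U_1$-edges creates independent $(2r-1)$-sets that your added cross edges are not shown to cover, so the proof does not close. The paper avoids this altogether: in the non-isolated case each component must be complete, balancing via Lemma~\ref{lemma:binom} gives $T(n,\K{(r-1)k+1}{r};k)=k\binom{n/k}{r}$, and Sidorenko's Construction 4 in \cite{sidorenko1995we} is a known \emph{connected} $n$-vertex $r$-graph with at most $k\binom{n/k}{r}$ edges satisfying \eqref{complement-condition}, which settles the theorem with $m=1$. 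If you want to keep your approach, you should either cite such a connected construction for the merged part $U$ or actually exhibit and verify the design you allude to.
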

Moreover, we determine $t_{3,2m+1}(m)$ and $t_{3,2m+2}(m)$ in terms of $t_{3,4}$:

\begin{theorem}\label{thm:disconnected_3-graph}
For integers $m\ge1$,
$$
t_{3,2m+1}(m)=\frac{1}{m^2},
$$
$$
t_{3,2m+2}(m)=(m-1+t_{3,4}^{-\frac{1}{2}})^{-2}.
$$
\end{theorem}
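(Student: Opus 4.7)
The proof splits into constructions realizing the upper bounds and matching lower bounds via a uniform optimization.

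For the upper bound on $t_{3,2m+1}(m)$, I would partition the $n$ vertices into $m$ parts of sizes $\lceil n/m\rceil$ or $\lfloor n/m\rfloor$ and place a complete $3$-graph on each part. Every component then has independence number $2$, so $\sum_i\alpha_i=2m=k-1$; any $(2m+1)$-set meets some part in at least three vertices (pigeonhole) and therefore contains an edge. The density tends to $1/m^2$. For $t_{3,2m+2}(m)$, I replace one component with a near-optimal $\K{4}{3}$-covering $3$-graph of density $\to t_{3,4}$ on $\alpha n$ vertices, and take $m-1$ complete components on $(1-\alpha)n/(m-1)$ vertices each. Now $\sum_i\alpha_i=3+2(m-1)=k-1$, so the covering condition holds; the limiting density $\alpha^3 t_{3,4}+(1-\alpha)^3/(m-1)^2$ is minimized at $\alpha=1/(1+(m-1)\sqrt{t_{3,4}})$, giving $(m-1+t_{3,4}^{-1/2})^{-2}$.

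For the matching lower bounds, let $H$ be an optimal $m$-component $\K{k}{3}$-covering $3$-graph with components $H_i$ of sizes $n_i$ and independence numbers $\alpha_i$. A union of independent sets from each component is independent in $H$, so the covering condition forces $\sum_i\alpha_i\le k-1$. Since $|E(H_i)|\ge T(n_i,\K{\alpha_i+1}{3})$ and $T(n,\K{q}{3})/\binom{n}{3}\to t_{3,q}$ from below, setting $\lambda_i=n_i/n$ yields the asymptotic lower bound $|E(H)|/\binom{n}{3}\ge \sum_i t_{3,\alpha_i+1}\lambda_i^3 - o(1)$. For a fixed $\alpha$-profile, Lagrange multipliers under $\sum\lambda_i=1$ minimize this sum at $\lambda_i\propto t_{3,\alpha_i+1}^{-1/2}$, with minimum value $\l(\sum_i t_{3,\alpha_i+1}^{-1/2}\r)^{-2}$. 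It then remains to maximize $\sum_i t_{3,\alpha_i+1}^{-1/2}$ over admissible integer profiles ($\alpha_i\ge 2$ for each non-isolated component, $\alpha_i=1$ for isolated ones). The maximum equals $m$ for $k=2m+1$ (forced by $\alpha_i=2$ throughout) and $m-1+t_{3,4}^{-1/2}$ for $k=2m+2$ (attained by one $\alpha_j=3$ and the rest equal to $2$), matching the upper bounds.

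The main obstacle is ruling out profiles that include isolated-vertex components; these slacken the budget $\sum_i\alpha_i\le k-1$ and might otherwise allow smaller densities. Handling them reduces to auxiliary inequalities of the form $t_{3,s+3}\ge 1/(s+1)^2$ for $k=2m+1$ and $t_{3,s+4}\ge 1/(s+t_{3,4}^{-1/2})^2$ for $k=2m+2$, where $s\ge 1$ counts the isolated vertices. The first follows directly from de Caen's classical upper bound on $\pi_{3,k}$; the second is subtler, and I would establish it by a short case analysis combining de Caen with the easy construction bound $t_{3,4}\le 4/9$. A minor technicality is that any component of bounded size contributes only $o(1)$ to the density and can be absorbed into the error term, so one may assume all relevant $n_i\to\infty$.
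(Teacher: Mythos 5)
Your proposal is correct and, in substance, it is the paper's own argument: both proofs reduce the problem to minimizing $\bigl(\sum_i \t_3(\alpha_i)^{-1/2}\bigr)^{-2}$ over integer profiles $(\alpha_1,\dots,\alpha_m)$ with $\sum_i\alpha_i\le k-1$ (your Lagrange step is the paper's Lemma~\ref{Lemma:weighted_Jensen}/Lemma~\ref{lemma:decompose}), and both dispose of profiles containing isolated vertices by playing de Caen's bound $t_{3,l+1}\ge 1/\binom{l}{2}$ against the Sidorenko-type construction $\t_3(l)\le 4/l^2$ (the paper's Lemma~\ref{lemma:average}). The organizational difference is that the paper never maximizes $\sum_i t_{3,\alpha_i+1}^{-1/2}$ globally: it uses a one-step replacement lemma (Lemma~\ref{lemma:concatenate}) to swap a single block $\{l\}\uplus(l-2)\cdot\{1\}$ for $(l-1)\cdot\{2\}$ and iterates, so at each stage only one ``oversized'' component is compared against the all-$2$'s baseline. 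Your direct maximization must instead handle profiles in which the excess $\sum_i(\alpha_i-2)$ is spread over several components simultaneously (e.g.\ two components with $\alpha_i=3$ and one isolated vertex in the $k=2m+2$ case); the crude bound $t_{3,3+e}^{-1/2}\le e+1$ is not quite enough there and you genuinely need $t_{3,4}^{-1/2}$ on one factor, so the ``short case analysis'' you defer is a real piece of work that the paper's iterative scheme sidesteps. The calculations do close (I checked the borderline cases), but you should write them out. One further point you should make explicit: in your upper-bound construction for $t_{3,2m+2}(m)$ the density-$t_{3,4}$ piece must form a single component, i.e.\ you need $\t_3(3)=t_{3,4}$; the paper gets this from Theorem~\ref{thm:r-graph} (optimal $K_4^{(3)}$-covering graphs are connected for large $n$), and you should either cite that or note that one can connect the pieces with $O(n)$ extra edges without changing the limiting density or the independence number.
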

We point out that it is unclear whether a similar result also holds for $r\ge 4$. For example, we do not know if $t_{4,7}(2)=1/8$. This is because the current best lower bound for $t_{4,6}$, to the best of our knowledge, is $1/10$, which is smaller than $1/8$. \newline

 \textbf{Structure of the paper.} In Section \ref{section:general} we give further definitions and we prove some general results that will be needed throughout the paper. In Section \ref{section:Turan_3_5} we give a complete solution of the problem for the case when $k=5$, $r=3$ and $m\geq 2$. In Section \ref{section:connected} we prove Theorem \ref{thm:r-graph}, and in Section \ref{section:disconnected} we prove Theorem~\ref{thm:disconnected_not_better} and Theorem~\ref{thm:disconnected_3-graph}. Finally, in Section \ref{Section:Questions} we propose some open questions.

    \section{Basic definitions and general results}\label{section:general}
    
    In the introduction we gave several definitions regarding hypergraphs and the Turán problem. In this section, we give further definitions and we prove some general results that hold for any $k>r \geq 3$.
    
    \begin{definition}
    Given a hypergraph $H=(V,E)$ and a vertex $v\in V$, we let $H\setminus\{v\}:=(V\setminus \{v\},E\setminus\{v\})$, where $E\setminus\{v\}:=\{e\in E\,:\,v\notin e\}$.
    \end{definition}
    
      \begin{definition}
    Given an $r$-graph $H=(V,E)$, an $r$-subset of $V$ is a \emph{non-edge} of $H$ if it does not belong to $E$, or equivalently, if it belongs to $E^\textrm{c}$.
    \end{definition}

    Given an hypergraph $H$, we let $C(H)$ denote the number of its connected components.
    
    \begin{definition}Let $H=(V,E)$ be an hypergraph. An \emph{independent set} of $H$ is a subset of $V$ which does not contain any edge of $H$. The \emph{independence number} of $H$, denoted $\alpha(H)$, is the maximum size of an independent set.\newline
    The {\em independence sequence} of $H$, denoted $S(H)$, is the multiset of size $C(H)$ whose members are the independence numbers of each component.
    \end{definition}
    
 Given a multiset $S$ whose elements are positive integers, we let $|S|$ denote the number of entries in $S$, and we let $\|S\|$ denote the sum of all entries of $S$. That is, if $S=\{s_1,s_2,\ldots,s_t\}$, then $|S|:=t$ and $\|S\|:=s_1+s_2+\cdots+s_t$. \newline Moreover, given two multisets $S$ and $S'$, we let $S\uplus S'$ denote the union of $S$ and $S'$. For instance, $\{3,1\}\uplus\{2,2,1\}=\{3,2,2,1,1\}$. Given a positive integer $m$, we let $m\cdot S$ denote the multiset union of $m$ copies of $S$. For example, $3\cdot\{2,1\}=\{2,2,2,1,1,1\}$.\newline
 
We also let $\T(n,r;S)$ be the smallest number of edges in an $n$-vertex $r$-graph $H$ such that $S(H)=S$, and we let
$$
\t_r(S):=\lim_{n\to\infty}\frac{\T(n,r;S)}{\binom{n}{r}}.
$$
The existence of this limit can be proved by a simple averaging argument similar to that for $\pi_{r,k}$. When $S=\{s\}$, we write $\T(n,r;s)=\T(n,r;\{s\})$ and $\t_r(s)=\t_r(\{s\})$ for short. One can then check that 
$$
t_{r,k}(m)=\min_{S\,:\,|S|=m,\,\|S\|=k-1}\t_r(S).
$$

The following lemmas will be needed throughout the paper. 
\begin{lemma}\label{lemma:concatenate}
Let $S^{(1)}$, $S^{(2)}$ and $S$ be multisets. If $$\t_r(S^{(1)})\ge \t_r(S^{(2)}),$$
then 
$$\t_r(S^{(1)}\uplus S)\ge \t_r(S^{(2)}\uplus S).$$
\end{lemma}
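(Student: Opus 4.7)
The plan is to run an exchange argument on optimal realizations and then pass to the limit. I would fix $n$ large and take $H$ to be an edge-minimizing $n$-vertex $r$-graph with $S(H) = S^{(1)} \uplus S$, so $|E(H)| = \T(n, r; S^{(1)} \uplus S)$. Grouping components according to the two parts of the independence sequence decomposes $H = H_1 \sqcup H'$, where $H_1$ sits on some $n_1$ vertices with $S(H_1) = S^{(1)}$ and $H'$ sits on the remaining $n - n_1$ vertices with $S(H') = S$. A standard swap argument then forces each of $H_1$ and $H'$ to individually minimize edges on its own vertex set (otherwise one could plug in a cheaper realization and contradict the optimality of $H$), so
$$
\T(n, r; S^{(1)} \uplus S) = \T(n_1, r; S^{(1)}) + \T(n - n_1, r; S).
$$

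Next I would replace $H_1$ by an edge-minimizer $\bar{H}_1$ on the same $n_1$ vertices with $S(\bar{H}_1) = S^{(2)}$; since the disjoint union $\bar{H}_1 \sqcup H'$ realizes $S^{(2)} \uplus S$, this gives
$$
\T(n, r; S^{(2)} \uplus S) \leq \T(n_1, r; S^{(2)}) + \T(n - n_1, r; S).
$$
Subtracting the two displayed relations cancels the $S$-piece and yields
$$
\T(n, r; S^{(1)} \uplus S) - \T(n, r; S^{(2)} \uplus S) \geq \T(n_1, r; S^{(1)}) - \T(n_1, r; S^{(2)}).
$$
Dividing through by $\binom{n}{r}$ and passing to a subsequence along which $n_1 / n \to \alpha \in [0, 1]$ (by compactness), I expect two regimes. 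If $\alpha > 0$, then $n_1 \to \infty$ and the right-hand side tends to $\alpha^r (\t_r(S^{(1)}) - \t_r(S^{(2)}))$, which is nonnegative by hypothesis. If $\alpha = 0$, then $\binom{n_1}{r} / \binom{n}{r} \to 0$ and the right-hand side tends to $0$. In either case the left-hand side tends to $\t_r(S^{(1)} \uplus S) - \t_r(S^{(2)} \uplus S)$, forcing this difference to be nonnegative.

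The hard part will be the degenerate case in which $n_1$ is too small for any $n_1$-vertex $r$-graph with independence sequence $S^{(2)}$ to exist, so that $\bar{H}_1$ is not defined. I expect to patch this by borrowing a bounded number of extra vertices from $H'$ to host a small fixed-size realization of $S^{(2)}$, incurring only $O(1)$ extra edges, which is negligible after normalization by $\binom{n}{r}$. Alternatively, one can handle the $n_1$-bounded regime separately by observing that in it both $\t_r(S^{(1)} \uplus S)$ and $\t_r(S^{(2)} \uplus S)$ collapse to $\t_r(S)$, rendering the desired inequality trivial.
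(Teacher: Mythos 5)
Your proposal is correct and follows essentially the same route as the paper's proof: both extract from an optimal realization of $S^{(1)}\uplus S$ the sub-hypergraph whose independence sequence is $S^{(1)}$, swap it for a realization of $S^{(2)}$ on the same vertex set, and conclude after normalizing by $\binom{n}{r}$ and letting $n\to\infty$. Your explicit subsequence argument for $n_1/n\to\alpha$ and your treatment of the degenerate small-$n_1$ regime are in fact more careful than the paper's, which compresses these points into an unexplained $o(n^r)$ error term.
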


\begin{proof}
It suffices to show that $\T(n,r;S^{(1)}\uplus S)\ge \T(n,r;S^{(2)}\uplus S)+o(n^r)$. For any optimal solution $H$ of $\T(n,r;S^{(1)}\uplus S)$, let $H_1$ be the induced subgraph of $H$ such that $S(H_1)=S^{(1)}$. Since 
$$
\T(n,r;S^{(1)})\ge \T(n,r;S^{(2)})+o(n^r),
$$ there is an $r$-graph $H_2$ on the same vertex set as $H_1$ such that $S(H_2)=S^{(2)}$ and $e(H_1)\ge e(H_2)+o(n^r)$. Let $H'$ be the $r$-graph obtained from $H$ by replacing $H_1$ with $H_2$. Then, we have $S({H'})=S^{(2)}\uplus S$ and $e(H')\le e(H)+o(n^r)$. Therefore, $\T(n,r;S^{(1)}\uplus S)\ge \T(n,r;S^{(2)}\uplus S)+o(n^r)$.
\end{proof}

\begin{lemma}\label{lemma:binom}
For integers $b\geq a>r>1$,
\begin{equation*}
    \binom{a}{r}+\binom{b}{r}<\binom{a-1}{r}+\binom{b+1}{r}.
\end{equation*}
\end{lemma}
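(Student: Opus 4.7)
The plan is to reduce the inequality to strict monotonicity of a single binomial coefficient via Pascal's identity. First I would rearrange the claim into the equivalent form
\begin{equation*}
\binom{b+1}{r}-\binom{b}{r} \;>\; \binom{a}{r}-\binom{a-1}{r},
\end{equation*}
pairing the larger argument with the smaller on each side. Next I would apply the Pascal identity $\binom{n}{r}=\binom{n-1}{r}+\binom{n-1}{r-1}$, which gives $\binom{n}{r}-\binom{n-1}{r}=\binom{n-1}{r-1}$. Substituting with $n=b+1$ on the left and $n=a$ on the right reduces the task to establishing
\begin{equation*}
\binom{b}{r-1} \;>\; \binom{a-1}{r-1}.
\end{equation*}

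Finally I would invoke strict monotonicity of $\binom{\cdot}{r-1}$. The hypotheses $b\geq a>r>1$ give $b\geq a>a-1$ and $a-1\geq r>r-1$, so both arguments lie in the range where $x\mapsto\binom{x}{r-1}$ is strictly increasing, and the strict inequality $b>a-1$ yields the conclusion. There is no real obstacle here; the only point worth checking carefully is that $a-1\geq r-1$, which is ensured by the hypothesis $a>r$ and prevents the right-hand binomial from collapsing to a degenerate case.
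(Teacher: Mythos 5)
Your proof is correct and follows essentially the same route as the paper's: both reduce the inequality via Pascal's identity to $\binom{a-1}{r-1}<\binom{b}{r-1}$ and conclude from $b>a-1$. Your extra check that $a-1\geq r-1$ keeps the binomial in its strictly increasing range is a welcome bit of care, but the argument is otherwise identical.
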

\begin{proof}
  We use the fact that
  \begin{equation*}
      \binom{a}{r}= \binom{a-1}{r}+ \binom{a-1}{r-1}
  \end{equation*}and, similarly,
  \begin{equation*}
      \binom{b+1}{r}= \binom{b}{r}+ \binom{b}{r-1}.
  \end{equation*}This implies that
  \begin{align*}
      \binom{a}{r}+\binom{b}{r}<\binom{a-1}{r}+\binom{b+1}{r} & \iff  \binom{a-1}{r-1}<\binom{b}{r-1}, 
  \end{align*}which is true since $b>a-1$.
\end{proof}

\begin{lemma}\label{Lemma:weighted_Jensen}
For $i\in\{1,\ldots,m\}$, let $t_i>0$, and $p_i\in(0,1]$ be real numbers such that $\sum_{i=1}^m p_i=1$. Then, for any integer $r\ge 2$, \begin{equation}\label{equation:weighted_Jensen}
    \sum_{i=1}^mt_i\cdot p_i^r\ge\l(\sum_{i=1}^mt_i^{-\frac{1}{r-1}}\r)^{-r+1}. 
\end{equation}
Equality holds if and only if $p_i=t_i^{-\frac{1}{r-1}}/\sum_{j=1}^mt_j^{-\frac{1}{r-1}}$.
\end{lemma}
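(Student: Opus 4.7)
The plan is to convert the inequality into a single application of Jensen's inequality, after a substitution that turns the weighted combination $\sum_i t_i p_i^r$ into a genuine probability-weighted $r$-th moment. Motivated by the shape of the right-hand side and of the conjectured extremizer, I would introduce
\begin{equation*}
    T:=\sum_{j=1}^m t_j^{-\frac{1}{r-1}}, \qquad q_i:=\frac{t_i^{-\frac{1}{r-1}}}{T},
\end{equation*}
so that $q_i\in(0,1]$ and $\sum_{i=1}^m q_i = 1$. By construction one has the key identity $t_i=(Tq_i)^{-(r-1)}$, which is exactly what is needed to rewrite the left-hand side as a $q$-expectation.

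Next, using this identity I would rewrite
\begin{equation*}
    \sum_{i=1}^m t_i p_i^r \;=\; T^{-(r-1)}\sum_{i=1}^m q_i\l(\frac{p_i}{q_i}\r)^r,
\end{equation*}
and then apply Jensen's inequality to the strictly convex function $f(x)=x^r$ (convex on $(0,\infty)$ for $r\ge 2$) with probability weights $q_i$ and values $p_i/q_i$:
\begin{equation*}
    \sum_{i=1}^m q_i\l(\frac{p_i}{q_i}\r)^r \;\ge\; \l(\sum_{i=1}^m q_i\cdot\frac{p_i}{q_i}\r)^r \;=\; \l(\sum_{i=1}^m p_i\r)^{\!r} \;=\; 1.
\end{equation*}
Multiplying by $T^{-(r-1)}$ recovers exactly \eqref{equation:weighted_Jensen}.

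For the equality case, strict convexity of $x^r$ for $r\ge 2$ forces the Jensen inequality to be tight only when $p_i/q_i$ is independent of $i$; combined with $\sum_i p_i=\sum_i q_i=1$, this pins down $p_i=q_i=t_i^{-1/(r-1)}/T$, matching the stated extremizer. I expect no real obstacle: the only creative step is guessing the right substitution $q_i\propto t_i^{-1/(r-1)}$, which is transparent either from the equality form of the target inequality or from the first-order Lagrange condition for minimizing $\sum_i t_i p_i^r$ subject to $\sum_i p_i=1$. A parallel proof via H\"older's inequality with conjugate exponents $r$ and $r/(r-1)$ (taking $a_i=t_i^{1/r}p_i$, $b_i=t_i^{-1/r}$) would serve as a backup and gives the same bound and equality case.
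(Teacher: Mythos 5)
Your proof is correct and follows essentially the same route as the paper: the paper sets $c_i:=t_i^{-1/(r-1)}$, $C:=\sum_j c_j$, rewrites $\sum_i t_i p_i^r = C\sum_i (c_i/C)(p_i/c_i)^r$, and applies Jensen with weights $c_i/C$ (your $q_i$), obtaining the same bound and the same equality characterization. No differences worth noting beyond the cosmetic normalization of the weights.
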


\begin{proof}
Let $c_i:=t_i^{-\frac{1}{r-1}}$, and let $C:=\sum_{i=1}^mc_i$. Then,
$$
(*):=\sum_{i=1}^mt_i\cdot p_i^r=\sum_{i=1}^mc_i\cdot\l(\frac{p_i}{c_i}\r)^r=C\sum_{i=1}^m\frac{c_i}{C}\cdot\l(\frac{p_i}{c_i}\r)^r.
$$
Hence, by Jensen's inequality, we have
$$
(*)\ge C\l(\sum_{i=1}^m\frac{c_i}{C}\cdot\frac{p_i}{c_i}\r)^r=C^{-r+1},
$$
and equality holds if and only if $$\frac{p_1}{c_1}=\frac{p_2}{c_2}=\cdots=\frac{p_m}{c_m}.$$ Together with $\sum_{i=1}^m p_i=1$, this implies that $p_i=c_i/C$ for all $i\in\{1,\ldots,m\}$.
\end{proof}

The next lemma establishes an equation for $\t_r(S)$.

\begin{lemma}\label{lemma:decompose}
Let $r\ge3$ be an integer, and let $S=\{s_1,s_2,\dots,s_m\}$ be a multiset such that $s_i\ge r-1$ for all $i\in\{1,\ldots,m\}$. Then,
\begin{equation*}\label{equation:decompose}
    \t_r(S)=\l(\sum_{i=1}^m\t_r(s_i)^{-\frac{1}{r-1}}\r)^{-r+1}.
\end{equation*}
\end{lemma}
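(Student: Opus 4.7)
The plan is to prove the identity $\t_r(S) = C^{-r+1}$, where $C := \sum_{i=1}^m \t_r(s_i)^{-1/(r-1)}$, by establishing matching upper and lower bounds. Both directions rest on Lemma~\ref{Lemma:weighted_Jensen} applied with weights $t_i := \t_r(s_i)$ and proportions $p_i := n_i/n$, where $n_i$ is the order of the $i$-th connected component.

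For the upper bound, I would choose the proportions $p_i^\ast := \t_r(s_i)^{-1/(r-1)}/C$ that saturate Lemma~\ref{Lemma:weighted_Jensen}, partition the vertex set into parts $V_1, \dots, V_m$ of sizes $n_i \approx p_i^\ast n$, and install on each $V_i$ an extremal construction witnessing $\T(n_i, r; s_i)$. Since $\T(\cdot, r; s_i)$ is defined via the singleton independence sequence $\{s_i\}$, each part is connected with independence number exactly $s_i$, so the disjoint union $H$ satisfies $S(H) = S$ and $e(H) = \sum_i \T(n_i, r; s_i)$. Dividing by $\binom{n}{r}$, using $\T(n_i, r; s_i)/\binom{n_i}{r} \to \t_r(s_i)$ together with $\binom{n_i}{r}/\binom{n}{r} \to (p_i^\ast)^r$, and substituting into the equality case of Lemma~\ref{Lemma:weighted_Jensen}, produces the bound $\t_r(S) \le C^{-r+1}$.

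For the lower bound, take any $r$-graph $H$ on $n$ vertices with $S(H) = S$ and components $H_i$ of order $n_i$ with $\alpha(H_i) = s_i$. Setting $p_i := n_i/n$, the estimate $e(H_i) \ge \T(n_i, r; s_i) \ge (\t_r(s_i) - o(1))\binom{n_i}{r}$ (valid once $n_i$ is large, directly from the definition of the limit $\t_r(s_i)$) summed over $i$ yields $e(H)/\binom{n}{r} \ge \sum_i \t_r(s_i) p_i^r - o(1)$. Lemma~\ref{Lemma:weighted_Jensen} then collapses the sum on the right to $\ge C^{-r+1} - o(1)$, and passing to the limit $n \to \infty$ finishes the lower bound.

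The main technical obstacle I foresee is the possibility that some component stays bounded in size while $n \to \infty$, so that $p_i \to 0$ and Lemma~\ref{Lemma:weighted_Jensen} (which requires strictly positive weights $p_i$) cannot be applied term by term. I plan to handle this by separating the indices into \emph{large} components (with $n_i \to \infty$) and \emph{bounded} ones: bounded components contribute only $O(1)$ edges, which is negligible against $\binom{n}{r}$, and Jensen applied only to the large indices replaces $C$ by a strictly smaller sum, which can only \emph{increase} the resulting bound $C^{-r+1}$. After rescaling the remaining $p_i$'s so they sum to $1$ and absorbing the factor $(1-o(1))^r$ into the error term, the target inequality is preserved. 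The hypothesis $s_i \ge r-1$ enters precisely here, since it guarantees that each $\t_r(s_i)$ is a positive real, keeping Lemma~\ref{Lemma:weighted_Jensen}'s hypothesis $t_i > 0$ satisfied throughout.
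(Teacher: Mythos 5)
Your proposal matches the paper's proof essentially step for step: the lower bound decomposes an extremal graph into its components and applies Lemma~\ref{Lemma:weighted_Jensen} with $p_i=n_i/n$, and the upper bound takes the disjoint union of extremal constructions on parts with the optimal proportions $p_i^\ast=\t_r(s_i)^{-1/(r-1)}/\sum_j\t_r(s_j)^{-1/(r-1)}$. Your handling of components with $n_i/n\to 0$ is, if anything, slightly more explicit than the paper's one-line $\liminf$ remark, but it is the same argument.
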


\begin{proof}
Let $G_n$ be an $r$-graph on $n$ vertices whose independence sequence is $S$ such that $e(G_n)=\T(n,r;S)$, and let $n_i$ be the number of vertices in the component of $G_n$ corresponding to $s_i$, for $1\le i\le m$. Then, by definition,
$$
\frac{e(G_n)}{\binom{n}{r}}\ge\sum_{i=1}^m\frac{\T(n_i,r;s_i)}{\binom{n}{r}}= \sum_{i=1}^m\frac{\binom{n_i}{r}}{\binom{n}{r}}\frac{\T(n_i,r;s_i)}{\binom{n_i}{r}} 
$$
It is not hard to check that
$$
\liminf_{n\to\infty}\frac{\binom{n_i}{r}}{\binom{n}{r}}\frac{\T(n_i,r;s_i)}{\binom{n_i}{r}}=\liminf_{n\to\infty}\l(\frac{n_i}{n}\r)^r\t_r(s_i);
$$
when $\liminf(n_i/n)>0$, this is clearly true; and when $\liminf(n_i/n)=0$, both sides are zero. Hence, by Lemma~\ref{Lemma:weighted_Jensen},

$$
\t_r(S)=\liminf_{n\to\infty}\frac{e(G_n)}{\binom{n}{r}}\ge\liminf_{n\to\infty}\sum^m_{i=1}\l(\frac{n_i}{n}\r)\t_r(s_i)\ge \l(\sum_{i=1}^m\t_r(s_i)^{-\frac{1}{r-1}}\r)^{-r+1}.
$$

On the other hand, we can construct a sequence of $r$-graphs whose density converges to $\l(\sum_{i=1}^m\t_r(s_i)^{-\frac{1}{r-1}}\r)^{-r+1}$ by taking the union of the optimal solutions of $\T(\lfloor np_i\rfloor,r;{s_i})$, for $1\le i\le m$, where 
$$
p_i=\frac{\t_r(s_i)^{-\frac{1}{r-1}}}{\sum_{j=1}^m\t_r(s_j)^{-\frac{1}{r-1}}}.
$$ We can therefore conclude that $\t_r(S)= \l(\sum_{i=1}^m\t_r(s_i)^{-\frac{1}{r-1}}\r)^{-r+1}.$
\end{proof}

    \section{Solution for $r=3$, $k=5$ and $m\geq 2$}\label{section:Turan_3_5}
    
        In this section we solve the problem for the case when $r=3$, $k=5$, and $m\geq 2$. This  will serve as a motivation for our next results. 

    \begin{lemma}\label{lem:components}
    Let $H=(V,E)$ be a $K_5^{(3)}$-free $3$-graph on $n\geq 6$ nodes. Then, $H^\textrm{c}$ has at most three connected components. Moreover,
    \begin{itemize}
        \item If $H^\textrm{c}$ has three connected components, then $H^\textrm{c}$ is given by two isolated vertices together with a complete $3$-graph on $n-2$ nodes.
        \item If $H^\textrm{c}$ has two connected components and it has no isolated vertices, then each connected component of $H^\textrm{c}$ is a complete $3$-graph.
        \item If $H^\textrm{c}$ has two connected components and one of them is an isolated vertex, then the other one is $K_4^{(3)}$-free in $H$.
    \end{itemize}
    \end{lemma}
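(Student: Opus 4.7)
The whole argument rests on a single observation, which I will use without further comment in every case: edges of $H^\textrm{c}$ lie entirely inside a single component of $H^\textrm{c}$, so any $3$-subset of $V$ that meets at least two components of $H^\textrm{c}$ is automatically a non-edge of $H^\textrm{c}$, hence an edge of $H$. The strategy in each case is then to assemble a $5$-set of vertices in which every $3$-subset either meets two components (and so is an edge of $H$) or is assumed to be an edge of $H$; such a $5$-set produces a $K_5^{(3)}\subseteq H$ and contradicts the hypothesis.

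To bound the number of components, I will show that $H^\textrm{c}$ cannot have $q\ge 4$ components. Since $n\ge 6$, one can choose five vertices meeting at least four distinct components of $H^\textrm{c}$ (one vertex from each of four components, plus one arbitrary extra vertex). Every $3$-subset of this $5$-set touches at least two components and is therefore an edge of $H$, giving $K_5^{(3)}\subseteq H$.

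For $q=3$, the same kind of choice forces the component sizes to be $1,1,n-2$: if two of the components each had at least two vertices, then picking two vertices from each and one from the third would again make every triple cross-component. Once the two singletons $u_1,u_2$ and the large component $C_3$ are isolated, I will show $H^\textrm{c}[C_3]$ is complete. If some triple $\{v_1,v_2,v_3\}\subseteq C_3$ were an edge of $H$, then in $\{u_1,u_2,v_1,v_2,v_3\}$ every triple other than $\{v_1,v_2,v_3\}$ contains $u_1$ or $u_2$ and therefore meets two components, so the whole $5$-set spans a $K_5^{(3)}$ in $H$.

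For $q=2$ the case split is analogous. If neither component is a singleton, I assume some triple $\{u_1,u_2,u_3\}\subseteq C_1$ is an edge of $H$; joining it with any two vertices of $C_2$ produces five vertices in which every triple is either $\{u_1,u_2,u_3\}$ or cross-component, yielding a $K_5^{(3)}$, so $H^\textrm{c}[C_1]$ (and by symmetry $H^\textrm{c}[C_2]$) must be complete. If one component is a singleton $\{v\}$ and $H[C_2]$ contained a $K_4^{(3)}$ on $\{v_1,v_2,v_3,v_4\}$, then in $\{v,v_1,v_2,v_3,v_4\}$ every triple either lies in $C_2$ (and is part of the $K_4^{(3)}$) or contains $v$ (and is cross-component), again giving a $K_5^{(3)}$. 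The only place where some care is needed is in checking, case by case, that one actually has enough distinct vertices available to form the required $5$-set; this is never a real obstacle since $n\ge 6$ and the earlier steps already constrain the component sizes.
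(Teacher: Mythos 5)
Your proof is correct and follows essentially the same approach as the paper's: in each case you assemble a five-vertex set whose cross-component triples are automatically edges of $H$, which is exactly the paper's use of condition \eqref{complement-condition} phrased in the primal language of finding a $K_5^{(3)}$ in $H$ rather than the dual language of forcing a non-edge among five vertices. The only cosmetic difference is that the paper explicitly notes that a component of a $3$-graph cannot have exactly two vertices, a fact you use implicitly.
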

    \begin{proof}
    We first assume, for the sake of a contradiction, that $H^\textrm{c}$ has at least four connected components $V_1,\ldots,V_4$. Let $v_1\in V_1,\ldots, v_4\in V_4$, and let $v_5\in V$ belong to any of the connected components. Since $H$ is $K_5^{(3)}$-free, $H^\textrm{c}$ satisfies \eqref{complement-condition} and therefore there exists $f\subseteq\{v_1,\ldots,v_5\}$ such that $f\in E^\textrm{c}$. Since $|f|=3$ and since $V_1,\ldots,V_4$ are not joined by any edge, this gives a contradiction. Hence, $H^\textrm{c}$ has at most three connected components.\newline
    
    If $H^\textrm{c}$ has three connected components, then clearly one component has at least one edge and therefore at least $3$ vertices.
    Assume, for the sake of a contradiction, that $H^\textrm{c}$ has three connected components $V_1,V_2,V_3$ and at least two of them, say $V_1$ and $V_2$, have cardinality larger than $1$. Then, we can pick $v_1,v_2\in V_1$, $v_3,v_4\in V_2$ and $v_5\in V_3$. By \eqref{complement-condition}, there exists $f'\subseteq\{v_1,\ldots,v_5\}$ such that $f'\in E^\textrm{c}$. Since $f'$ has cardinality $3$, this gives a contradiction. Therefore, if $H^\textrm{c}$ has three connected component, then two of them are given by isolated vertices. Now, assume that $H^\textrm{c}$ has three connected components $V_1,V_2,V_3$, where $V_2$ and $V_3$ are given by isolated vertices. Given $v_1,v_2,v_3\in V_1$, $v_4\in V_2$, $v_5\in V_3$, \eqref{complement-condition} implies that $\{v_1,v_2,v_3\}\in E^\textrm{c}$. Since this holds for all $v_1,v_2,v_3\in V_1$, $V_1$ is a complete $3$-graph on $n-2$ nodes.\newline
    
    Now, assume that $H^\textrm{c}$ has two connected components $V_1,V_2$, and it has no isolated vertices. Since edges have cardinality $3$, there cannot be connected components of cardinality $2$, therefore $|V_i|\geq 3$ for $i=1,2$. Let $v_1,v_2,v_3\in V_1$, and $v_4,v_5\in V_2$. By \eqref{complement-condition}, there exists $f''\subseteq\{v_1,\ldots,v_5\}$ such that $f''\in E^\textrm{c}$. Since there are no edges between $V_1$ and $V_2$, this implies that $f=\{v_1,v_2,v_3\}$. This is true for all $v_1,v_2,v_3\in V_1$, and the same reasoning can be applied to $V_2$. Hence, both $V_1$ and $V_2$ are complete $3$-graphs.\newline
    
    Finally, assume that $H^\textrm{c}$ has two connected components and one of them is an isolated vertex $v_1$. Let $v_2,\ldots,v_5\in V\setminus\{v_1\}$. If $v_2,\ldots,v_5$ form a complete $3$-graph on $4$ vertices in $H$, then $v_1,\ldots,v_5$ form a complete $3$-graph on $5$ vertices in $H$, which leads to a contradiction. Hence, $V\setminus\{v_1\}$ is $K_4^{(3)}$-free in $H$.
    \end{proof}
    Lemma \ref{lem:components} allows us to prove the following
    
        \begin{theorem}\label{thm:max}
            Let $H=(V,E)$ be a $K_5^{(3)}$-free $3$-graph on $n\geq 6$ nodes such that $H^\textrm{c}$ has more than one connected component.
            \begin{itemize} 
            \item If $n=2m$, then 
            \begin{equation*}
              |E^\textrm{c}|\geq 2\cdot \binom{m}{3}, 
            \end{equation*}or equivalently
            \begin{equation*}
              |E|\leq m^2(m-1),
            \end{equation*}
            with equality if and only if $H^\textrm{c}$ is the disjoint union of two copies of $K_m^{(3)}$.\newline
            \item If $n=2m+1$, then
            \begin{equation*}
                |E^\textrm{c}|\geq \binom{m}{3}+\binom{m+1}{3},
            \end{equation*}or equivalently
             \begin{equation*}
                |E|\leq m^3+\frac{m^2}{2}-\frac{m}{2},
            \end{equation*}
            with equality if and only if $H^\textrm{c}$ is the disjoint union of $K_m^{(3)}$ and $K_{m+1}^{(3)}$.
            \end{itemize}
        \end{theorem}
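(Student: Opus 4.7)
My plan is to use Lemma~\ref{lem:components} to reduce to three structural possibilities for $H^\textrm{c}$ and then minimize $|E^\textrm{c}|$ over each. By that lemma, exactly one of the following holds: (A) $H^\textrm{c}$ has three components, namely two isolated vertices plus $K_{n-2}^{(3)}$, giving $|E^\textrm{c}|=\binom{n-2}{3}$; (B) $H^\textrm{c}=K_a^{(3)}\sqcup K_b^{(3)}$ for some $a+b=n$ with $a,b\ge 3$, giving $|E^\textrm{c}|=\binom{a}{3}+\binom{b}{3}$; or (C) $H^\textrm{c}$ has an isolated vertex $v_1$ and a second component spanning $V\setminus\{v_1\}$ such that $H\setminus\{v_1\}$ is $K_4^{(3)}$-free, giving $|E^\textrm{c}|\ge T(n-1,K_4^{(3)})$.

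Within case~(B), I would invoke Lemma~\ref{lemma:binom} iteratively to conclude that, under $a+b=n$ and $a,b\ge 3$, the sum $\binom{a}{3}+\binom{b}{3}$ is uniquely minimized at the balanced split $(m,m)$ for $n=2m$ and $(m,m+1)$ for $n=2m+1$. These produce exactly the target values $2\binom{m}{3}$ and $\binom{m}{3}+\binom{m+1}{3}$, and characterize the extremal hypergraphs inside~(B).

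Then I would rule out (A) and (C) as competitors. For (A), an explicit computation gives $\binom{n-2}{3}-2\binom{m}{3}=(m-1)(m-2)^2$ when $n=2m$ and $\binom{n-2}{3}-\binom{m}{3}-\binom{m+1}{3}=(2m-1)(m-1)(m-2)/2$ when $n=2m+1$, both strictly positive for $m\ge 3$, i.e.\ $n\ge 6$. For (C), I would use the double-counting bound $T(n-1,K_4^{(3)})\ge \binom{n-1}{3}/4$, which follows because every $4$-subset of $V\setminus\{v_1\}$ must contain a non-edge and each non-edge lies in $n-4$ such $4$-subsets. For $n=2m$, the same kind of computation yields $\binom{2m-1}{3}/4>2\binom{m}{3}$.

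The main obstacle is the odd case $n=2m+1$, where $\binom{n-1}{3}/4$ coincides exactly with $\binom{m}{3}+\binom{m+1}{3}$, so a strict improvement of the trivial bound is needed. I would argue strictness as follows: equality $T(n-1,K_4^{(3)})=\binom{n-1}{3}/4$ would force every $4$-subset of $V\setminus\{v_1\}$ to contain exactly one non-edge, hence no two non-edges could share two vertices, and pair-counting would then give $3\binom{n-1}{3}/4\le \binom{n-1}{2}$, i.e.\ $n-1\le 6$; the residual cases $n-1\in\{5,6\}$ are excluded by integrality ($\binom{5}{3}/4=5/2$) and by the non-existence of the Steiner triple system $S(2,3,6)$, respectively. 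Combining the three comparisons, the unique minimizer of $|E^\textrm{c}|$ is the balanced case~(B), which gives the stated inequalities and identifies the extremal hypergraphs as the disjoint unions of complete $3$-graphs in the theorem.
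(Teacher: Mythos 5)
Your proposal is correct, and it follows the paper's decomposition exactly: Lemma~\ref{lem:components} splits the problem into the three structural cases, Lemma~\ref{lemma:binom} handles the two-complete-components case, and the three-component case is dispatched by direct computation. The one place where you genuinely diverge is the isolated-vertex case (your Case~C). The paper gets rid of it by importing the nontrivial upper bound $\ex(n-1,K_4^{(3)})\le 0.63\binom{n-1}{3}$ from the literature, which gives comfortable room in both parities. You instead use only the elementary double-counting bound $T(n-1,K_4^{(3)})\ge\binom{n-1}{3}/4$, which forces you to confront the fact that for odd $n=2m+1$ this bound lands exactly on $\binom{m}{3}+\binom{m+1}{3}$; your strictness argument (equality forces every $4$-set to contain exactly one non-edge, hence a linear triple system, hence $n-1\le 6$ by pair-counting, with $n-1=6$ killed by the non-existence of $S(2,3,6)$) is correct and closes that gap, including the check that the extremal configurations cannot occur in this case. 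The trade-off is clear: the paper's route is shorter but leans on a cited bound whose proof is far from elementary; yours is fully self-contained at the cost of an extra combinatorial argument in the tight case. Both are valid; your version has the additional virtue of making the theorem independent of the $0.63$ estimate.
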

        \begin{proof}We consider three cases.
        \begin{enumerate}
        \item Case 1: $H^\textrm{c}$ has more than two connected components. By Lemma \ref{lem:components}, $H^\textrm{c}$ is given by two isolated vertices together with a complete $3$-graph on $n-2$ nodes. Therefore, 
        \begin{equation*}
        |E^\textrm{c}|=\binom{n-2}{3}.
        \end{equation*}
         It is easy to check that, for $n=2m$,
         \begin{equation*}
             |E^\textrm{c}|= \binom{n-2}{3}>2\cdot \binom{m}{3}
         \end{equation*}and, for $n=2m+1$,
         \begin{equation*}
        |E^\textrm{c}|=  \binom{n-2}{3}>\binom{m}{3}+\binom{m+1}{3}.
         \end{equation*}
            \item Case 2: $H^\textrm{c}$ has two connected components and no isolated vertices. By Lemma \ref{lem:components}, each connected component of $H^\textrm{c}$ is a complete $3$-graph. Hence, if they have cardinality $c_1$ and $c_2$, respectively, then
        \begin{equation*}
        |E^\textrm{c}|=\binom{c_1}{3}+\binom{c_2}{3}.
        \end{equation*}By Lemma~\ref{lemma:binom}, for $n=2m$ this is minimized precisely by $c_1=c_2=m$, while for $n=2m+1$ this is minimized by $c_1=m$ and $c_2=m+1$. 
        \item Case 3: $H^\textrm{c}$ has two connected components and one of them is an isolated vertex $\hat{v}$. By Lemma \ref{lem:components}, $H\setminus\{\hat{v}\}$ is $K_4^{(3)}$-free. Therefore, using the bound $\ex(n-1,K^{(3)}_4)\le (0.63)\cdot \binom{n-1}{3}$ in \cite{de1991current,chung-upper},
        \begin{align*}
            |E|&=\deg \hat{v}+|E(H\setminus\{\hat{v}\})|\\
            &\leq \binom{n-1}{2}+\ex(n-1,K^{(3)}_4)\\
            &<\binom{n-1}{2}+(0.63)\cdot \binom{n-1}{3}.
        \end{align*}If $n=2m$, then one can check that
        \begin{equation*}
 |E|<\binom{2m-1}{2}+(0.63)\cdot \binom{2m-1}{3}<m^2(m-1).
        \end{equation*}Similarly, if $n=2m+1$, then
        \begin{equation*}
            |E|<\binom{2m}{2}+(0.63)\cdot \binom{2m}{3}<m^3+\frac{m^2}{2}-\frac{m}{2}.
        \end{equation*}
        \end{enumerate}This proves the claim.
        \end{proof}
   
    Note that the last result implies that $3/4=t_{3,5}(2)\le t_{3,5}(m)$ for $m\ge 2$.\newline
    Moreover, Turán conjectured that the disjoint unions of two complete $r$-graphs are optimal for $r=3$ and $k=5$ (in the general case when $m\geq 1$). However, as discussed in \cite{sidorenko1995we}, counterexamples are known for any odd $n\geq 9$. For $n$ even, no counterexample was found yet, and the conjecture has proved to be true up to $n=12$.

\section{Number of connected components}\label{section:connected}
In this section we prove Theorem~\ref{thm:r-graph}. As a preliminary result, we first prove a theorem showing that the optimal solutions of $T(n,\K{k}{r})$ have no isolated vertices, if $n$ is big enough.

\begin{lemma}\label{lem:isolated}
For integers $k> r\ge2$ and $n\geq k+\binom{k-2}{r-1}$, let $H=(V,E)$ be a $\K{k}{r}$-free $r$-graph with $n$ vertices and $\ex(n,\K{k}{r})$ edges. Then, $H^\textrm{c}$ has no isolated vertices. 
\end{lemma}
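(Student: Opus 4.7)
The plan is to argue by contradiction. Suppose that $H^\textrm{c}$ has an isolated vertex $v$: I will exhibit a $\K{k}{r}$-free $r$-graph on $V$ with strictly more edges than $|E(H)|$, contradicting the extremality of $H$.

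Since $v$ is isolated in $H^\textrm{c}$, every $r$-subset of $V$ containing $v$ is an edge of $H$; in particular, the link of $v$ in $H$ is the complete $(r-1)$-graph on $V\setminus\{v\}$. A short argument then shows that $H\setminus\{v\}$ must be $\K{k-1}{r}$-free, since any $\K{k-1}{r}$ inside it would extend, via $v$, to a $\K{k}{r}$ in $H$. Splitting the edges of $H$ according to whether they contain $v$ yields
$$|E(H)|=\binom{n-1}{r-1}+|E(H\setminus\{v\})|\le\binom{n-1}{r-1}+\ex(n-1,\K{k-1}{r});$$
equivalently, in the dual formulation, $|E(H^\textrm{c})|\ge T(n-1,\K{k-1}{r})$, since $H^\textrm{c}\setminus\{v\}$ must cover every $(k-1)$-subset of $V\setminus\{v\}$.

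It therefore suffices to construct a covering $G'$ of all $k$-subsets of $V$ with $|E(G')|<T(n-1,\K{k-1}{r})$. I would start from an optimal covering $G_0$ of the $(k-1)$-subsets of $V\setminus\{v\}$ and perform a local swap: provided $G_0$ contains two edges $e_1,e_2$ sharing an $(r-1)$-subset $f$, I set $G':=(G_0\setminus\{e_1,e_2\})\cup\{\{v\}\cup f\}$. I claim that $G'$ covers every $k$-subset of $V$. Indeed, any $(k-1)$-subset $T\subseteq V\setminus\{v\}$ whose only $G_0$-cover lay in $\{e_1,e_2\}$ satisfies $T\supseteq f$, so $\{v\}\cup T$ is now covered by $\{v\}\cup f$; and for any $k$-subset $S\subseteq V\setminus\{v\}$ containing $e_1$ or $e_2$, the $(k-1)$-subsets $S\setminus\{x\}$ with $x\in f$ contain neither $e_1$ nor $e_2$, so by the irredundancy of $G_0$ each is covered in $G_0$ by some $r$-subset of $S$ distinct from $e_1,e_2$, which survives in $G'$. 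Since $|G'|=|G_0|-1=T(n-1,\K{k-1}{r})-1$, this produces the required contradiction.

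The main obstacle is therefore to show that an optimal $G_0$ always contains two edges sharing an $(r-1)$-subset, under the sole hypothesis $n\ge k+\binom{k-2}{r-1}$. If no such pair exists, then $G_0$ is "linear" and a double-count gives $|G_0|\le\binom{n-1}{r-1}/r$; combined with the covering lower bound $|G_0|\ge\binom{n-1}{r}/\binom{k-1}{r}$, this is contradictory as soon as $n>r+\binom{k-1}{r}$, which already handles the base case $k=r+1$ and large $n$ for every $r,k$. For the tighter range $k+\binom{k-2}{r-1}\le n\le r+\binom{k-1}{r}$ a finer analysis is needed: I would proceed by induction on $k$ (applicable because $n-1\ge(k-1)+\binom{k-3}{r-1}$), and use Lemma~\ref{lemma:binom} together with a structural study of saturated $\K{k-1}{r}$-free hypergraphs to rule out linear optimal coverings at the boundary of the hypothesis.
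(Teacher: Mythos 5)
Your overall architecture is the dual of the paper's: you assume an isolated vertex $v$ of $H^\textrm{c}$, observe that $H^\textrm{c}$ restricted to $V\setminus\{v\}$ must cover all $(k-1)$-subsets (so $|E(H^\textrm{c})|\ge T(n-1,\K{k-1}{r})$), and then try to beat this by a local swap that replaces two covering edges $e_1,e_2$ sharing an $(r-1)$-set $f$ with the single edge $\{v\}\cup f$. Your verification that the swapped hypergraph still covers every $k$-subset of $V$ is correct and essentially identical to the paper's check (which, in complement form, deletes the edge $U\cup\{u\}$ and adds the two edges $U\cup\{w_i\}$, $U\cup\{w_j\}$). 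The problem is the step you yourself flag as ``the main obstacle'': you never actually prove that two covering edges sharing an $(r-1)$-subset exist in the stated range of $n$. Your double-counting argument (linearity gives $|G_0|\le\binom{n-1}{r-1}/r$ versus the covering bound $|G_0|\ge\binom{n-1}{r}/\binom{k-1}{r}$) only yields a contradiction for $n>r+\binom{k-1}{r}$, which is in general far larger than the hypothesis $n\ge k+\binom{k-2}{r-1}$ (e.g.\ for $r=3$, $k=10$ the lemma claims the result from $n\ge 38$, while your count needs $n>87$). The ``finer analysis'' by induction and structural study that you invoke for the remaining range is not carried out, so the proof is incomplete precisely where the numerical hypothesis of the lemma matters.

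The paper closes exactly this gap by exploiting saturation rather than counting. Since $H$ has $\ex(n,\K{k}{r})$ edges, adding any edge creates a $\K{k}{r}$; hence $H'=H\setminus\{u\}$ is edge-maximal $\K{k-1}{r}$-free, and any non-edge $f=\{v_1,\dots,v_r\}$ of $H'$ extends to vertices $v_{r+1},\dots,v_{k-1}$ so that $f$ is the \emph{only} non-edge among $v_1,\dots,v_{k-1}$. In particular the $(k-2)$-set $\{v_2,\dots,v_{k-1}\}$ spans no non-edge, so for every vertex $w$ outside $\{u,v_2,\dots,v_{k-1}\}$ the $(k-1)$-set $\{w,v_2,\dots,v_{k-1}\}$ must contain a non-edge of the form $\{w\}\cup U_w$ with $U_w$ an $(r-1)$-subset of $\{v_2,\dots,v_{k-1}\}$. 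There are only $\binom{k-2}{r-1}$ candidates for $U_w$, and $n\ge k+\binom{k-2}{r-1}$ guarantees more than $\binom{k-2}{r-1}$ such vertices $w$, so the pigeonhole principle produces two non-edges sharing an $(r-1)$-set --- which is exactly the pair your swap needs, and is where the hypothesis on $n$ enters. If you replace your counting step with this saturation-plus-pigeonhole argument (applied to $G_0=H^\textrm{c}\setminus\{v\}$ rather than to an abstract optimal covering, so that the saturation of $H$ is available), your proof becomes complete and coincides with the paper's.
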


\begin{proof}
Suppose, for the sake of a contradiction, that $H^\textrm{c}$ contains an isolated vertex $u$, and let $H':=H\setminus\{u\}$. Then, $H'$ is a $\K{k-1}{r}$-free $r$-graph on $n-1$ vertices. Moreover, since $H$ is an optimal solution for the Turán problem, if any edge is added to $H$, then $H$ is not $\K{k}{r}$-free anymore. Therefore, if any edge is added to $H'$, then $H'$ is not $\K{k-1}{r}$-free anymore. Hence, given $f=\{v_1,\ldots,v_r\}\in E({H'}^{\textrm{c}})$, there exist $v_{r+1},\ldots,v_{k-1}\in V\setminus \{u\}$ such that there are no other edges among $v_1,\ldots,v_{k-1}$ in ${H'}^{\textrm{c}}$ other than $f$. Therefore, in particular, there are no edges in ${H'}^{\textrm{c}}$ among $v_2,\ldots,v_{k-1}$. Let now $w_1,\ldots,w_l\in V\setminus\{u,v_2,\ldots,v_{k-1}\}$, for some $l\geq 1$. Then, for each $i\in \{1,\ldots,l\}$, there exists $e_i\in E({H'}^{\textrm{c}})$ such that $e_i\subset \{w_i,v_2,\ldots,v_{k-1}\}$ and $w_i\in e_i$.\newline
If $l>\binom{k-2}{r-1}$, which is possible since $n\geq k+\binom{k-2}{r-1}$, there exist $i,j\in \{1,\ldots,l\}$ such that $i\neq j$ and $e_i\setminus\{w_i\}=e_j\setminus\{w_j\}=:U$.\newline
Let now $H''$ be the $r$-graph obtained from $H$ by deleting $U\cup\{u\}$ and adding $U\cup \{w_i\}$ and $U\cup\{w_j\}$ as edges. Then, $H''$ is $\K{k}{r}$-free. In fact, 
\begin{itemize}
    \item Any $k$-set not containing $U$ is not $\K{k}{r}$ since $H$ is $\K{k}{r}$-free;
    \item Any $k$-set containing $U\cup\{u\}$ contains the non-edge $U\cup\{u\}$;
    \item For any $k$-set $W$ containing $U$ but not $\{u\}$, fix a vertex $v\in U$. Note that $W\setminus\{v\}$ is a $(k-1)$-set .  This implies that there exists a non-edge of $H'$ contained in $W\setminus\{v\}$, which is neither $U\cup \{w_i\}$ nor $U\cup\{w_j\}$, and which is also a non-edge of $H''$.
\end{itemize}
   Therefore, $H''$ is an $\K{k}{r}$-free $r$-graph with $n$ nodes and more edges than $H$, which is a contradiction.
\end{proof}

Now we are ready to prove Theorem~\ref{thm:r-graph}.
\begin{proof}[Proof of Theorem~\ref{thm:r-graph}]
Let $H$ be an optimal solution of $T(n,\K{k}{r})$. By Lemma~\ref{lem:isolated}, $H$ has no isolated verteces. Hence, the independence number of each connected component of $H$ is at least $r-1$. Suppose, for the sake of a contradiction, that $m>\left\lfloor{(k-1)}/{(r-1)}\right\rfloor$. Then $\alpha(H)\ge(r-1)m>k-1$, which contradicts to $\alpha(H)\le k-1$.
\end{proof}

\section{Tur\'an numbers for disconnected $3$-graphs}\label{section:disconnected}
This section is dedicated to the proofs of Theorem~\ref{thm:disconnected_not_better} and Theorem~\ref{thm:disconnected_3-graph}. 
\begin{proof}[Proof of Theorem~\ref{thm:disconnected_not_better}]
The proof is divided into two cases. 

    \noindent Case 1: The optimal solution of $T(n,\K{(r-1)k+1}{r};k)$ has an isolated vertex. In this case, by Lemma~\ref{lem:isolated}, we have
    $$T(n,\K{(r-1)k+1}{r};k)> T(n,\K{(r-1)k+1}{r};k-1).$$
    
    \noindent Case 2: The optimal solution of $T(n,\K{(r-1)k+1}{r};k)$ does not contain isolated vertices. In this case, the $k$ components are all complete $r$-graphs. Hence, by Lemma~\ref{lemma:binom}, these components all have size $n/k$, implying that $$T(n,\K{(r-1)k+1}{r};k)=k\binom{n/k}{r}.$$ But we also know that $T(n,\K{(r-1)k+1}{r};1)\le k\binom {n/k}{r}$, by the construction of Sidorenko~(Construction 4 in Section 4.2 of \cite{sidorenko1995we}).
\end{proof}

For the proof of Theorem~\ref{thm:disconnected_3-graph}, we need the following key lemma.
\begin{lemma}\label{lemma:average}
For any integer $l\ge 2$,
$$
\t_3(\{l+1\}\uplus(l-1)\cdot\{1\})> \t_3(l\cdot\{2\}),
$$
$$
\t_3(\{l+2\}\uplus(l-1)\cdot\{1\})> \t_3(\{3\}\uplus(l-1)\cdot\{2\}).
$$
\end{lemma}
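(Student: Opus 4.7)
The plan is to first reduce both inequalities to strict bounds on the single-component densities $\t_3(l+1)$ and $\t_3(l+2)$, then deduce these via Lemma~\ref{lemma:decompose}, known bounds, and a supersaturation comparison.

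First I would observe that an isolated vertex contributes no edges and only one vertex, so appending $\{1\}$ to any multiset leaves $\t_r$ unchanged in the limit:
$$\t_3(\{l+1\} \uplus (l-1) \cdot \{1\}) = \t_3(l+1), \qquad \t_3(\{l+2\} \uplus (l-1) \cdot \{1\}) = \t_3(l+2).$$
Since the complete $3$-graph is the unique connected $3$-graph on at least three vertices with $\alpha = 2$, we have $\t_3(2) = 1$. Applying Lemma~\ref{lemma:decompose} to the right-hand sides yields
$$\t_3(l \cdot \{2\}) = \frac{1}{l^2}, \qquad \t_3(\{3\} \uplus (l-1) \cdot \{2\}) = \l(\t_3(3)^{-1/2} + l-1\r)^{-2},$$
so the lemma reduces to the pair of strict inequalities
$$\t_3(l+1) > \frac{1}{l^2} \quad\text{and}\quad \t_3(l+2) > \l(\t_3(3)^{-1/2} + l-1\r)^{-2}. \qquad (\ast)$$

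For $(\ast)$ the key observation is a supersaturation phenomenon: the unique (up to balancing) minimiser of $\t_3(l\cdot\{2\})$ on $n$ vertices is the disjoint union of $l$ balanced copies of $K_{n/l}^{(3)}$, which has density $1/l^2$ but independence number $2l$. To realise a connected configuration with $\alpha = l+1 < 2l$, one must destroy every $(l+2)$-element independent set of the disjoint-cliques configuration. A double counting shows there are $\Theta(n^{l+2})$ such sets, while each extra edge lies in at most $\Theta(n^{l-1})$ of them; hence $\Omega(n^3)$ extra edges are needed, bumping the density by a positive constant depending only on $l$. The same reasoning, with one copy of $K_{n/l}^{(3)}$ replaced by an optimal $\t_3(3)$-graph, delivers the second inequality of $(\ast)$.

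The hard part is turning this count into a genuine lower bound on $\t_3(l+1)$ for an \emph{arbitrary} connected minimiser, not just for perturbations of the disjoint-cliques configuration. My plan to handle this is two-fold. Qualitatively, I would combine the trivial bound $\t_3(l+1) \ge 1/\binom{l+2}{3}$ (which, since $\binom{l+2}{3}/l^2 \to 1$ and is equal to $1/l^2$ at $l=2$ with equality not attained, already gives the base case $l=2$ using $t_{3,4} > 1/4$ from Razborov's bound $\pi_{3,4} \le 0.5617 < 3/4$) with an induction on $l$, the inductive step using Lemma~\ref{lemma:concatenate} to bootstrap from the $l=2$ case through the multisets $\{l+1\} \uplus (l-1)\cdot\{1\}$. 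Quantitatively, I would upgrade this to the strict bound $\t_3(l+1) > 1/l^2$ by the counting sketched above, coupled with a stability argument showing that any near-minimiser must have a partition into $l$ almost-cliques. The analogous strategy handles the second inequality of $(\ast)$, with $\t_3(3) = t_{3,4}$ entering explicitly in the base and the extra clique replaced by an optimal $\t_3(3)$-graph.
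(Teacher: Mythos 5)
Your reduction is correct and matches the paper's first step: isolated vertices contribute nothing in the limit, $\t_3(2)=1$ because a connected component with independence number $2$ must be complete, and Lemma~\ref{lemma:decompose} then turns both inequalities into
$\t_3(l+1)>1/l^2$ and $\t_3(l+2)>\bigl(\t_3(3)^{-1/2}+l-1\bigr)^{-2}$. The gap is that you never actually prove these two inequalities for general $l$. The ``trivial'' averaging bound $\t_3(l+1)\ge 1/\binom{l+2}{3}$ decays like $6/l^3$, and your claim that $\binom{l+2}{3}/l^2\to 1$ is false (the ratio grows like $l/6$), so this bound is strictly weaker than $1/l^2$ for every $l\ge 3$ and only handles the base case $l=2$. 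The proposed induction via Lemma~\ref{lemma:concatenate} cannot close the gap either: that lemma only transfers a comparison between two multisets to their unions with a common third multiset; it provides no relation between $\t_3(l+1)$ and $\t_3(l)$, and no inductive step is actually exhibited. Finally, the supersaturation count is a heuristic valid only for perturbations of the disjoint-cliques configuration; you flag the passage to arbitrary minimisers as ``the hard part'' and propose to resolve it with a stability statement (every near-minimiser partitions into $l$ almost-cliques) that is itself unproven and of open-problem difficulty for $3$-graphs. As written, the core of the lemma is asserted rather than proved.

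What the paper does instead is cite de Caen's lower bound $t_{3,l+2}\ge 1/\binom{l+1}{2}=2/(l(l+1))$, which decays at the correct rate $\Theta(l^{-2})$ and satisfies $2/(l(l+1))>1/l^2$ for all $l\ge 2$; since a connected graph with independence number $l+1$ satisfies the covering condition for $(l+2)$-sets, this immediately gives $\t_3(l+1)\ge t_{3,l+2}>1/l^2$. For the second inequality the paper additionally upper-bounds the right-hand side using Sidorenko's construction, $\t_3(3)\le 4/9$, so that $\bigl(\t_3(3)^{-1/2}+l-1\bigr)^{-2}\le 4/(2l+1)^2<1/\binom{l+2}{2}\le \t_3(l+2)$. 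If you replace your supersaturation/induction plan with these two citations (de Caen for the lower bounds, Sidorenko for the upper bound on $\t_3(3)$), your argument becomes complete and coincides with the paper's.
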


\begin{proof}
Due to de Caen~\cite{deCaen-extension}, we have 
$$
t_{r,l+1}\ge \frac{1}{\binom{l}{r-1}}.
$$
On the other hand, the construction of Sidorenko~(Construction 4 in Section 4.2 of \cite{sidorenko1995we}) gives
$$
\t_r(l)\le \frac{(r-1)^2}{l^2}.
$$
Hence for any integer $l\ge 2$,
\begin{equation}\label{equation:general}
    \frac{1}{\binom{l}{2}}\le t_{3,l+1}\le \t_3(l)\le \frac{4}{l^2}.
\end{equation}

Therefore, by definition and inequality~(\ref{equation:general})
$$
\t_3(\{l+1\}\uplus(l-1)\cdot\{1\})=\t_3(l+1)\ge \frac{1}{\binom{l+1}{2}},
$$
$$
\t_3(\{l+2\}\uplus(l-1)\cdot\{1\})=\t_3(l+2)\ge \frac{1}{\binom{l+2}{2}}.
$$
On the other hand, by Lemma~\ref{lemma:decompose} and inequality~(\ref{equation:general})
$$
\t_3(l\cdot\{2\})=l^{-2}<\frac{1}{\binom{l+1}{2}},
$$
$$
\t_3(\{3\}\uplus(l-1)\cdot\{2\})=(l-1+(\t_{3}(3))^{-\frac{1}{2}})^{-2}\le \left(\frac{2l+1}{2}\right)^{-2}<\frac{1}{\binom{l+2}{2}}.
$$
\end{proof}

We are now ready to prove Theorem~\ref{thm:disconnected_3-graph}.
\begin{proof}[Proof of Theorem~\ref{thm:disconnected_3-graph}]
For any multiset $S$ such that $|S|=m$ and $\|S\|=2m$, if there exists an element of $S$ which is larger than 2, then there exists an integer $l>2$ such that $\{l\}\uplus(l-2)\cdot\{1\}$ is a subset of $S$. Let $S'$ be a multiset obtained from $S$ by replacing $\{l\}\uplus(l-2)\cdot\{1\}$ by $(l-1)\cdot\{2\}$. Then, by Lemma~\ref{lemma:concatenate} and Lemma~\ref{lemma:average}, $\t_3(S')< \t_3(S)$. Also, note that the number of $2$'s in $S$ is smaller than that in $S'$. Hence, by repeating the above argument finitely many times and Lemma~\ref{lemma:decompose}, we obtain $\t_3(S)\ge \t_3(m\cdot\{2\})=1/m^2$. Therefore, 
$$
t_{3,2m+1}(m)=\min_{S\,:\,|S|=m,\,\|S\|=2m}\t_{3}(S)=\t_3(m\cdot\{2\})=\frac{1}{m^2}.
$$
Similarly, we have 
$$
t_{3,2m+2}(m)=\min_{S\,:\,|S|=m,\,\|S\|=2m+1}\t_{3}(S)=\t_3(\{3\}\uplus(m-1)\cdot\{2\})=(m-1+\t_{3}(3)^{-\frac{1}{2}})^{-2}.
$$
By Theorem~\ref{thm:r-graph}, the optimal solution of $T(n,\K{4}{3})$ must be connected when $n$ is large enough. This implies $t_{3,4}=\t_3(3)$. Therefore,
$$
t_{3,2m+2}(m)=(m-1+t_{3,4}^{-\frac{1}{2}})^{-2}.
$$
\end{proof}

\section{Open questions}\label{Section:Questions}

We conclude by formulating some open questions.

\begin{question}
Can we improve the bound $n\ge k+\binom{k-2}{r-1}$ in Theorem~\ref{thm:r-graph}?
\end{question}

\begin{question}
For $m<k$, can we explicitly express $t_{3,2k+1}(m)$ or $t_{3,2k+2}(m)$ in terms of the $t_{3,l}$'s?
\end{question}

\begin{question}
Can we prove results similar to Theorem~\ref{thm:disconnected_3-graph} for $r\ge 4$?
\end{question}

\section*{Funding}
Raffaella Mulas was supported by the Max Planck Society's Minerva Grant.

\section*{Acknowledgments}
The authors are grateful to the anonymous referees for the comments and suggestions that have greatly improved the first version of this paper.

\bibliography{Turan}

\end{document}